\newenvironment{theorem}[2][Theorem]{\begin{trivlist}
\item[\hskip \labelsep {\bfseries #1}\hskip \labelsep {\bfseries #2}]}{\end{trivlist}}
\newtheorem{thm}{Theorem}
\newtheorem{lem}[thm]{Lemma}
\newtheorem{prop}[thm]{Proposition}
\theoremstyle{definition}
\newtheorem{defn}[thm]{Definition}
\theoremstyle{remark}
\newtheorem{rem}[thm]{Remark}
\theoremstyle{remark}
\newtheorem{rems}[thm]{Remarks}
\theoremstyle{definition}
\newtheorem{ex}[thm]{Examples}
\numberwithin{equation}{section}
\begin{document}

\title[Some homological criteria]{Some homological criteria for regular, complete intersection and Gorenstein rings}%
\author{Javier Majadas}%
\address{Departamento de \'Algebra, Facultad de Matem\'aticas, Universidad de Santiago de Compostela, E15782 Santiago de Compostela, Spain}%
\email{j.majadas@usc.es}%

\keywords{regular local ring, finite flat dimension}%
\thanks{2010 {\em Mathematics Subject Classification.} 13H05, 13H10, 13D03, 13D05}

\begin{abstract}
  Regularity, complete intersection and Gorenstein properties of a local ring can be characterized by homological conditions on the canonical homomorphism into its residue field (Serre, Avramov, Auslander). It is also known that in positive characteristic, the Frobenius endomorphism can also be used for these characterizations (Kunz, ...), and more generally any contracting endomorphism. We introduce here a class of local homomorphisms, in some sense larger than all above, for which these characterizations still hold, providing an unified treatment for this class of homomorphisms.
\end{abstract}
\maketitle

For a module of finite type $M$ over a commutative noetherian local ring $(A,\mathfrak{m},k)$, we consider the projective dimension $pd_A(M)$, the complete intersection dimension \cite{AGP} CI-dim$_A(M)$, and the Gorenstein dimension \cite{AB} G-dim$_A(M)$. We have
\begin{theorem}{} (\cite{Se}, \cite{AGP}, \cite{AB}) \\*
\emph{(i) $A$ is regular $\Leftrightarrow$ pd$_A(k)<\infty$\\*
(ii) $A$ is complete intersection $\Leftrightarrow$ CI-dim$_A(k)<\infty$\\*
(iii) $A$ is Gorenstein $\Leftrightarrow$ G-dim$_A(k)<\infty$.\\*}
\end{theorem}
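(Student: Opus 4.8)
The plan is to treat the three equivalences separately, since in each the residue field $k$ plays the role of a test module and the same two tools recur: an Auslander--Buchsbaum/Auslander--Bridger type equality relating the relevant dimension to depth, and the depth-sensitivity of $\mathrm{Tor}$ and $\mathrm{Ext}$ against $k$. In all three cases the forward implication is essentially a formal property of the dimension in question (regular, complete intersection and Gorenstein rings have every finite module of finite pd, CI-dim, respectively G-dim), so the real content is in the converses. I expect (ii) to require the most machinery and (i) to serve as the template for the others.

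For (i), the forward direction is the Koszul resolution: if $A$ is regular of dimension $d$ with regular system of parameters $x_1,\dots,x_d$, the Koszul complex on $x_1,\dots,x_d$ is a finite free resolution of $k$, so pd$_A(k)=d<\infty$. For the converse I would induct on the embedding dimension $e=\dim_k\mathfrak{m}/\mathfrak{m}^2$. If $e=0$ then $A=k$ is regular. If $e>0$, Auslander--Buchsbaum gives that pd$_A(k)$ equals depth $A$; I first exclude depth $A=0$ (which would force pd$_A(k)=0$, hence $k$ free and $\mathfrak{m}=0$, contradicting $e>0$), so by prime avoidance there is a non-zerodivisor $x\in\mathfrak{m}\setminus\mathfrak{m}^2$. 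Passing to $A/xA$ drops the embedding dimension by one and keeps the projective dimension of the residue field finite, while $A$ is regular if and only if $A/xA$ is; the induction closes.

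For (ii), the forward direction is that over a complete intersection every finite module has finite CI-dimension. For the converse I would unwind CI-dim$_A(k)<\infty$: by definition there is a quasi-deformation $A\to A'\leftarrow Q$, with $A\to A'$ flat local and $Q\to A'$ surjective with kernel generated by a $Q$-regular sequence, such that the residue field of $A'$ has finite projective dimension over $Q$. By part (i) this forces $Q$ to be regular, whence $A'=Q/(\text{regular sequence})$ is a complete intersection; it then remains to descend the complete intersection property along the flat local homomorphism $A\to A'$. This descent is the step I expect to be the main obstacle, since it relies on the structure theory of complete intersection homomorphisms rather than on an elementary argument.

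For (iii), the forward direction is that over a Gorenstein ring every finite module has finite G-dimension. For the converse, the Auslander--Bridger formula gives that G-dim$_A(k)$ equals depth $A$, which I call $d$, whenever it is finite; hence $\mathrm{Ext}^i_A(k,A)=0$ for $i>d$. On the other hand, depth-sensitivity of Ext gives $\mathrm{Ext}^i_A(k,A)=0$ for $i<d$ as well. Thus the Bass numbers $\mu^i(A)=\dim_k\mathrm{Ext}^i_A(k,A)$ vanish for every $i\neq d$, and since the injective dimension of a finite module is the largest degree in which a Bass number is nonzero, we get id$_A A=d<\infty$, i.e. $A$ is Gorenstein. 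The delicate point here is invoking the precise vanishing behaviour of $\mathrm{Ext}^\bullet_A(k,A)$ together with the identification of injective dimension with the top nonvanishing Bass number.
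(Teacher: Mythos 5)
Parts (i) and (iii) of your proposal are sound. In (i) you follow the classical inductive proof of Serre's theorem; the one step you assert without justification, namely that $\mathrm{pd}_{A/xA}(k)<\infty$ follows from $\mathrm{pd}_A(k)<\infty$, is exactly the technical heart of that induction (it requires Nagata's lemma that $k\cong xA/x\mathfrak{m}$ splits off $\mathfrak{m}/x\mathfrak{m}$, plus a change-of-rings theorem), but it is standard. Part (iii) via the Auslander--Bridger formula, depth sensitivity of $\mathrm{Ext}^{\bullet}_A(k,A)$, and Bass's identification of $\mathrm{id}_A A$ with the top nonvanishing Bass number, is correct. Note also that the paper itself never proves this theorem: it is quoted as background from \cite{Se}, \cite{AGP}, \cite{AB}, and the paper's own results (Propositions \ref{main}, \ref{ci}, \ref{Gorenstein}, via Andr\'e--Quillen homology and Avramov's injectivity theorem) prove variants of these converses for the larger class of $h_2$-vanishing homomorphisms.

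The genuine gap is in (ii). The definition of CI-dim$_A(k)<\infty$ gives a quasi-deformation $A\to A'\leftarrow Q$ with $\mathrm{pd}_Q(k\otimes_AA')<\infty$, but $k\otimes_AA'=A'/\mathfrak{m}A'$ is the \emph{closed fibre} of $A\to A'$, not the residue field of $A'$; the two agree only when $\mathfrak{m}A'$ is the maximal ideal of $A'$, which the definition of quasi-deformation does not require. Consequently part (i) cannot be invoked, and your intermediate conclusion ``$Q$ is regular'' is false in general: take $A=k$, $A'=Q=k[[x]]/(x^2)$ with the empty regular sequence; then $k\otimes_AA'=Q$ and $\mathrm{pd}_Q(Q)=0<\infty$, yet $Q$ is not regular. (Here $A$ is of course a complete intersection, so there is no conflict with the theorem, but the example kills your inference.) The actual proof in \cite{AGP} is of a different and deeper nature: from $\mathrm{pd}_Q(A'/\mathfrak{m}A')<\infty$ and Gulliksen's change-of-rings theory for the deformation $Q\to A'$, one shows that the Betti numbers $\beta_n^A(k)=\beta_n^{A'}(A'/\mathfrak{m}A')$ grow polynomially, and then one invokes Gulliksen's theorem that finite complexity of $k$ forces $A$ to be a complete intersection. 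Your style of argument (conclude $Q$ regular, hence $A'$ complete intersection, then descend along the flat map $A\to A'$) does work when the quasi-deformation is required to have regular closed fibre --- that is, for the \emph{upper} complete intersection dimension CI*-dim --- and this is essentially what Proposition \ref{ci} of the paper carries out; but for CI-dim as defined in \cite{AGP} that route is not available.
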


Subsequently, some results were obtained showing that if $A$ contains a field of positive characteristic, the above results are also valid if we replace the map $A \rightarrow k$ by the Frobenius endomorphism $\phi :A \rightarrow A$. We summarize them in:

\begin{theorem}{} (\cite{Ku}, \cite{Ro}, \cite{BM}, \cite{He}, \cite{TY}, \cite{IS}) \\*
\emph{Let $A$ be a noetherian local ring containing a field of characteristic $p>0$, let $\phi :A \rightarrow A, \phi(a)=a^p$ be the Frobenius homomorphism, and let $^{\phi}A$ be the ring $A$ considered as $A$-module via $\phi$. Then:\\*
(i) $A$ is regular $\Leftrightarrow$ fd$_A(^{\phi}A)<\infty$\\*
(ii) $A$ is complete intersection $\Leftrightarrow$ CI-dim$_A(^{\phi}A)<\infty$\\*
(iii) $A$ is Gorenstein $\Leftrightarrow$ G-dim$_A(^{\phi}A)<\infty$}
\end{theorem}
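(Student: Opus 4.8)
The plan is to reduce all three equivalences to the classical residue-field characterizations recalled in the first Theorem above, exploiting the Frobenius iterates $\phi^e\colon A\to A$, $a\mapsto a^{p^e}$, and the modules ${}^{\phi^e}A$. The forward implications are the formal ones. If $A$ is regular, Kunz's theorem \cite{Ku} gives that $\phi$ is flat, whence $\operatorname{fd}_A({}^{\phi}A)=0$; if $A$ is complete intersection (resp. Gorenstein), then every finitely generated $A$-module has finite CI-dimension (resp. finite G-dimension) by \cite{AGP} (resp. \cite{AB}), so one only has to know that ${}^{\phi}A$ falls under these statements. This is immediate when $A$ is $F$-finite, since then ${}^{\phi}A$ is a finite $A$-module, and in general one reduces to this case by completing and using the appropriate extensions of these invariants to possibly non-finite modules. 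Thus the substance lies entirely in the three converse implications.

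For the converses I would first pin the relevant dimension down to its minimal value. Let $X_A(-)$ denote any one of $\operatorname{fd}_A(-)$, the CI-dimension, or the G-dimension; each obeys an Auslander--Buchsbaum equality $X_A(M)=\operatorname{depth}A-\operatorname{depth}_A M$ for a finite module $M$ of finite $X$-dimension \cite{AGP},\cite{AB}. The elementary point is that $\operatorname{depth}_A({}^{\phi^e}A)=\operatorname{depth}A$: multiplication by $x\in\mathfrak{m}$ on ${}^{\phi^e}A$ is multiplication by $x^{p^e}$ on $A$, so a sequence in $\mathfrak{m}$ is ${}^{\phi^e}A$-regular if and only if the sequence of its $p^e$-th powers is $A$-regular, and powers of a regular sequence are again regular. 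Hence, in the $F$-finite (or completed) setting, finiteness of $X_A({}^{\phi}A)$ forces $X_A({}^{\phi}A)=0$. For (i) this already closes the argument: $\operatorname{fd}_A({}^{\phi}A)=0$ means $\phi$ is flat, and Kunz's theorem returns that $A$ is regular; in the non-$F$-finite case one instead invokes the theorem that finite flat dimension of the Frobenius forces flatness \cite{Ro}.

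For (ii) and (iii) the vanishing $X_A({}^{\phi}A)=0$ is not by itself a ring-theoretic property, so I would transport the finiteness to the residue field $k$. Two ingredients are required. First, an iteration step: writing $\phi^e=\phi\circ\cdots\circ\phi$ and using the transitivity of these dimensions under the Frobenius functor, finiteness of $X_A({}^{\phi}A)$ propagates to finiteness of $X_A({}^{\phi^e}A)$ for all $e\ge 1$ (for flat dimension this is just that a composite of flat maps is flat). Second, one uses that $\phi$ is contracting: $\phi^e(\mathfrak{m})A=\mathfrak{m}^{[p^e]}\subseteq\mathfrak{m}^{p^e}$, so the closed fibres $A/\mathfrak{m}^{[p^e]}$ approximate $k$ arbitrarily well as $e\to\infty$. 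The goal is to show that finiteness of $X_A({}^{\phi^e}A)$ for all $e$, together with this contracting behaviour, forces $X_A(k)<\infty$, after which parts (ii) and (iii) of the first Theorem above yield that $A$ is complete intersection, respectively Gorenstein.

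The main obstacle is precisely this last transfer from ${}^{\phi^e}A$ to $k$. Unlike the regular case, where Kunz converts flatness in one step into regularity, for CI- and G-dimension there is no one-step analogue, and controlling the limit $e\to\infty$ demands a quantitative input: one must bound the homology of $k\otimes^{\mathbf L}_{A}{}^{\phi^e}A$ and compare it with the Koszul complex on a system of parameters, where the New Intersection Theorem and the homological machinery underlying CI- and G-dimension enter. Making this comparison uniform in $e$, and carrying out the reduction to the $F$-finite or complete situation so that the finite-module definitions of CI- and G-dimension genuinely apply, are the two points that will require the most care.
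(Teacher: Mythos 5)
Your proposal is incomplete precisely where the content of the theorem lies, and you acknowledge this yourself. The substance of the statement is the three converse implications, and for (ii) and (iii) your plan is to transfer finiteness of CI-dim (resp.\ G-dim) from ${}^{\phi^e}A$ to the residue field $k$ by letting $e\to\infty$ and comparing $k\otimes_A^{\mathbf{L}}{}^{\phi^e}A$ with Koszul complexes; but no such transfer is actually carried out, no uniform-in-$e$ bound is produced, and you defer exactly ``the two points that will require the most care'' --- which are the whole theorem. The Auslander--Buchsbaum reduction to $X_A({}^{\phi}A)=0$ is correct but does not advance matters: knowing that ${}^{\phi}A$ is totally reflexive (G-dim $0$) is no more obviously a ring-theoretic property of $A$ than finiteness was. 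In part (i), the non-F-finite case is handled by citing \cite{Ro}, but Rodicio's theorem \emph{is} part (i), so this is circular; and the proposed reduction to the F-finite situation ``by completing'' fails, since $\hat{A}$ is F-finite only when $k$ is (one would need something like the gamma construction, not completion). What survives as a complete argument is only part (i) for F-finite $A$: depth equality, Auslander--Buchsbaum, and Kunz's flatness criterion.

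The paper's route is entirely different and avoids any limit over Frobenius iterates. (Strictly speaking the paper states this theorem as a compilation of cited results, but its own machinery recovers the hard directions uniformly, with CI-dim and G-dim read as their ``upper'' variants.) The Frobenius is a contracting endomorphism, and Example \ref{ex}.ii shows that a contracting endomorphism has a power inducing the zero map on $H_2(A,k,k)$ --- the $h_2$-vanishing property of Definition \ref{h2}; for the Frobenius itself every power is even $h_n$-vanishing for all $n$ by \cite[Lemme 53]{An-b}, cf.\ Remark \ref{hn}. Against this, Avramov's theorem says that finite flat dimension forces the induced map $H_2(A,l,l)\rightarrow H_2(B,l,l)$ to be \emph{injective}. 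A map that is both zero and injective gives $H_2(A,k,k)=0$, hence $A$ regular (Proposition \ref{main}); the complete intersection and Gorenstein cases exploit the same vanishing-versus-injectivity tension after passing to a Cohen factorization and a deformation $Q\rightarrow R'$, where Proposition \ref{main} makes $Q$ regular and one descends (Propositions \ref{ci} and \ref{Gorenstein}). This formulation through Cohen factorizations is also how the paper sidesteps the fact that ${}^{\phi}A$ need not be a finite $A$-module. To complete your plan you would have to replace the unexecuted transfer step by an argument of this homological kind; the residue-field characterizations are then never applied to $k$ directly, but are replaced by the criterion $H_2(A,k,k)=0$ and its analogues.
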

(here, $fd$ denotes flat dimension; also, CI-dim and G-dim must be modified since $^{\phi}A$ is not in general of finite type over $A$).\\

Some of these results were extended from the Frobenius endomorphism to the more general class of contracting endomorphisms (that is, endomorphisms $f:A \rightarrow A$ such that $f^i(\mathfrak{m}) \subset \mathfrak{m}^2$ for some $i>0$) in \cite {KL}, \cite {TY}, \cite {IS}, \cite {AIM}, \cite {Ra}, \cite {AHIY}.

In this paper we introduce a class of local homomorphisms (called $h_2$-vanishing), and for this class we prove similar criteria for regularity, complete intersection and Gorensteinness. The interest in working with this class is twofold:

First, our class contains both the canonical epimorphism into the residue field and (powers of) all contracting endomorphisms. In fact, our class is much larger. It does not only contains the canonical epimorphism $A \rightarrow k$ of a local ring into its residue field, but more generally any local homomorphism $A \rightarrow B$ factorizing through a regular ring (Example \ref{ex}.i), and even in the case of an endomorphism, the proof given in Example \ref{ex}.ii gives us an idea of how more is needed for an $h_2$-vanishing endomorphism to be contracting. Note for example that if $A$ has a contracting endomorphism, then necessarily $A$ contains a field (the subring fixed by $f$), while this is not needed for $h_2$-vanishing homomorphisms.

Second, our treatment for this class of homomorphisms is uniform, giving a single proof valid in particular at once for the residue field and for the Frobenius endomorphism. This fact is a step in allowing us to understand better why some (and what) classes of local homomorphisms are "test" for regularity, complete intersection and Gorensteinness. We obtain these results as easy consequences of strong theorems in Andr\'e-Quillen homology of commutative algebras. In particular, the following theorem by Avramov \cite{Av-a} will play a key role:

\begin{theorem}{}
\emph{Let $f:(A,\mathfrak{m},k)\rightarrow(B,\mathfrak{n},l)$ be a local homomorphism of noetherian local rings. Let ${a_1,...a_r}$ be a minimal set of generators of the maximal ideal $\mathfrak{m}$ of $A$, and ${f(a_1),...f(a_r),b_1,...,b_s}$ a set of generators of the ideal $\mathfrak{n}$. Assume that $fd_A(B)<\infty$. Then, the induced homomorphism between the first Koszul homology modules
$$ H_1(a_1,...a_r;A)\otimes _kl \rightarrow H_1(f(a_1),...f(a_r),b_1,...,b_s;B)$$
is injective.\\}
\\
\end{theorem}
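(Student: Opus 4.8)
The plan is to realize the displayed arrow as the map induced on first homology by an explicit chain map of Koszul complexes covering $f$, and then to convert the finiteness hypothesis into the injectivity of a comparison in homological degree $2$. First I would observe that $K(f(a_1),\dots,f(a_r),b_1,\dots,b_s;B)$ is obtained from $K(a_1,\dots,a_r;A)\otimes_A B=K(f(a_1),\dots,f(a_r);B)$ by the Koszul extension on the extra variables $b_1,\dots,b_s$; hence base change along $f$, followed by the inclusion of the subcomplex on the first $r$ variables, furnishes a chain map $K(a_1,\dots,a_r;A)\to K(f(a_1),\dots,f(a_r),b_1,\dots,b_s;B)$ over $f$. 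Since $a_1,\dots,a_r$ generate $\mathfrak m$, each $H_i(a_1,\dots,a_r;A)$ is annihilated by $\mathfrak m$ and is therefore a $k$-vector space, so the induced map factors automatically through $H_1(a_1,\dots,a_r;A)\otimes_k l$. Proving injectivity then amounts to showing that the images of a fixed $k$-basis of $H_1(a_1,\dots,a_r;A)$, represented by explicit syzygies $z_1,\dots,z_h\in\mathfrak m A^r$ of $(a_1,\dots,a_r)$, remain $l$-linearly independent in $H_1(f(a_1),\dots,f(a_r),b_1,\dots,b_s;B)$.

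Next I would fix the minimal free resolution $F_\bullet$ of $k$ over $A$, taken as the acyclic closure, so that its $2$-skeleton is $A\leftarrow A^r\leftarrow(\wedge^2 A^r\oplus A^h)$ with the summand $A^h$ mapping onto the chosen cycles $z_1,\dots,z_h$; minimality gives $\partial(F_\bullet)\subseteq\mathfrak m F_\bullet$, and this identifies $H_1(a_1,\dots,a_r;A)$ with the degree-$2$ indecomposable part of $\mathrm{Tor}^A_\bullet(k,k)$, i.e. with the second Andr\'e--Quillen homology of $k$ over $A$. The hypothesis $fd_A(B)<\infty$ enters through the complex $F_\bullet\otimes_A B$, whose homology is $\mathrm{Tor}^A_\bullet(k,B)$ and which vanishes in high degrees; the point is to leverage this vanishing to control the comparison in degree $2$.

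To use the flatness efficiently I would pass to a Cohen factorization $A\to A'\to B$ of $f$, with $A\to A'$ flat local and $A'\to B$ a surjection from a complete local ring. Flatness makes $A\to A'$ harmless for Koszul homology — it base-changes $H_1(a_1,\dots,a_r;A)$ isomorphically to the first Koszul homology over $A'$ of the image sequence — while $fd_A(B)<\infty$ together with flatness of $A\to A'$ yields $\mathrm{pd}_{A'}(B)<\infty$. One is thus reduced to the surjective case, where the Jacobi--Zariski exact sequence of Andr\'e--Quillen homology for $A'\to B\to l$, combined with the vanishing of high cotangent homology forced by finite projective dimension, should produce the required injection in degree $2$; transporting this back through the flat map and along the Koszul extension on $b_1,\dots,b_s$ gives the statement.

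The step I expect to be the main obstacle is precisely this conversion of finiteness into injectivity in degree $2$, and in particular the bookkeeping forced by the extra variables $b_1,\dots,b_s$: because $f(a_1),\dots,f(a_r),b_1,\dots,b_s$ need not minimally generate $\mathfrak n$, the target Koszul homology carries, besides the intrinsic cotangent contribution, spurious classes arising from the redundant generators, and one must check that no new degree-$2$ boundary over $B$ — created by mixing the $f(a_i)$ and $b_j$ columns in $\wedge^2 B^{r+s}$ — can absorb a nontrivial $l$-combination of the $z_j$. Showing that finite flat (hence, after factorization, projective) dimension rules this out, through the vanishing in the Jacobi--Zariski sequence, is the crux of the argument.
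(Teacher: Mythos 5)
You should know at the outset that the paper does not prove this statement: it is Avramov's theorem, quoted from \cite{Av-a} (with a detailed account in \cite[4.2.2]{MR}), and it is used in the paper as a black box. So your proposal has to stand entirely on its own. Its preparatory steps are fine: the chain map of Koszul complexes over $f$, the observation that $H_1(a_1,\dots,a_r;A)$ is a $k$-vector space so the map factors through $-\otimes_k l$, the identification of $H_1(a_1,\dots,a_r;A)$ with $H_2(A,k,k)$ (\cite[15.12]{An}), and the passage to a Cohen factorization $A\to A'\to \hat B$ \cite{AFH}, which reduces to a surjection with $pd_{A'}(\hat B)<\infty$, are all standard and correct (completing $B$ is harmless here because the Koszul homology in question is a finite-dimensional $l$-space). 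Also, the bookkeeping with the redundant generators $b_1,\dots,b_s$, which you single out as the main obstacle, is actually the easy part: adjoining a generator lying in the ideal of the others splits off a free $l$-summand of $H_1$ (change basis $e_b\mapsto e_b-\sum c_ie_{y_i}$), and since the syzygies have coefficients in $\mathfrak{m}$, their images have no component in these summands.

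The genuine gap is at the step you yourself call the crux, and the mechanism you propose for it is false. For a surjection $A'\to B$ with kernel $I$, finite projective dimension of $B$ over $A'$ does \emph{not} force vanishing of the cotangent homology $H_n(A',B,-)$ for $n\geq 2$: by \cite[6.25]{An} (fact 6 of the paper), already $H_2(A',B,l)=0$ is equivalent to $I$ being generated by a regular sequence, and by Avramov's solution of Quillen's conjecture \cite{Av-b}, vanishing of $H_n(A',B,-)$ for $n\gg 0$ together with finite flat dimension forces the same conclusion. A concrete counterexample to your claimed vanishing: $A'=k[[x,y]]$, $B=A'/(x^2,xy)$; then $pd_{A'}(B)=2$, but $(x^2,xy)$ is not a complete intersection ideal (it needs two generators yet has height one), so $H_2(A',B,l)\neq 0$, and indeed $H_n(A',B,l)\neq 0$ for infinitely many $n$. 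Consequently the Jacobi--Zariski sequence for $A'\to B\to l$ leaves you with exactly the obligation you cannot discharge: the kernel of $H_2(A',l,l)\to H_2(B,l,l)$ is the image of $H_2(A',B,l)\to H_2(A',l,l)$, and since $H_2(A',B,l)$ itself need not vanish, you must prove that its \emph{image} vanishes (equivalently, that $H_3(B,l,l)\to H_2(A',B,l)$ is surjective). That assertion is not a formal consequence of finite projective dimension plus exactness; it is Avramov's theorem restated. The actual proofs (in \cite{Av-a} and \cite[4.2.2]{MR}) go through DG-algebra structure theory --- acyclic closures, descent of deviations, injectivity at the level of homotopy Lie algebras --- none of which appears in your outline, so the proposal as written reduces the theorem to itself.
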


For convenience of the reader, we will start by recalling here some facts of Andr\'e-Quillen homology that we are going to use through the paper. Associated to a homomorphism of (always commutative) rings $f:A \rightarrow B$ and to a $B$-module $M$ we have Andr\'e-Quillen homology $B$-modules $H_n(A,B,M)$ for all integers $n \geq 0$, which are functorial in all three variables. \\

1. If $B=A/I$, then $H_0(A,B,M)=0$, $H_1(A,B,M)=I/I^2 \otimes_BM$  \cite [4.60, 6.1]{An}. \\

2. (Base change) Let $A \rightarrow B$, $A \rightarrow C$ be ring homomorphisms such that $B$ or $C$ is flat as $A$-module, and let $M$ be a $B\otimes_AC$-module. Then $H_n(A,B,M)=H_n(C,B\otimes_AC,M)$ for all $n$ \cite [4.54]{An}. \\

3. Let $B$ be an $A$-algebra, $C$ a $B$-algebra and $M$ a flat $C$-module. Then $H_n(A,B,M)=H_n(A,B,C) \otimes_CM$ for all $n$ \cite [3.20]{An}. \\

4. (Jacobi-Zariski exact sequence) If $A \rightarrow B \rightarrow C$ are ring homomorphisms and $M$ is a $C$-module, we have a natural exact sequence \cite [5.1]{An}

\begin{align*}
... \rightarrow H_{n+1}(B,C,M)\to \\
H_n(A,B,M) \rightarrow H_n(A,C,M)\rightarrow H_n(B,C,M)\rightarrow \\
H_{n-1}(A,B,M) \rightarrow \hspace{5mm}
... \hspace{5mm} \rightarrow H_0(B,C,M)\rightarrow 0\\
\end{align*}
5. If $K\rightarrow L$ is a field extension and $M$ an $L$-module, we have $H_n(K,L,M)=0$ for all $n \geq 2$ \cite [7.4]{An}. So if $A\rightarrow K \rightarrow L$ are ring homomorphisms with $K$ and $L$ fields, from 4 we obtain $H_n(A,K,L)=H_n(A,L,L)$ for all $n \geq 2$, which, using 3, gives $H_n(A,K,K) \otimes_KL=H_n(A,L,L)$ for all $n \geq 2$. \\

6. If $I$ is an ideal of a noetherian local ring $(A,\mathfrak{m},k)$, then the following are equivalent: \\*
(i) $I$ is generated by a regular sequence \\*
(ii) $H_2(A,A/I,k)=0$ \\*
(iii) $H_n(A,A/I,M)=0$ for any $A/I$-module $M$ for  all $n \geq 2$ \cite [6.25]{An}. \\

In particular, a noetherian local ring $(A,\mathfrak{m},k)$ is regular if and only if $H_2(A,k,k)=0$. \\

7. If $(A,\mathfrak{m},k)$ is a noetherian local ring and $\hat{A}$ is its $\mathfrak{m}$-completion, then $H_n(A,k,k)=H_n(\hat{A},k,k)$ for all $n \geq 0$ \cite [10.18]{An}. \\

8. If $(A,\mathfrak{m},k)\rightarrow(B,\mathfrak{n},l)$ is a local homomorphism of noetherian local rings with $fd_A(B)<\infty$, then  the theorem of Avramov cited above says that the homomorphism $H_2(A,l,l) \rightarrow H_2(B,l,l)$ is injective \cite [15.12]{An} (see details in the proof of \cite [4.2.2]{MR}). In fact, Avramov shows much more. In Remark \ref{h4} we will use that $H_4(A,l,l) \rightarrow H_4(B,l,l)$ is also injective.\\

\begin{defn}\label{h2}
Let $f:(A,\mathfrak{m},k)\rightarrow(B,\mathfrak{n},l)$ be a local homomorphism of noetherian local rings. We say that $f$ has the \emph{$h_2$-vanishing property} if the homomorphism induced by $f$ \\
$$H_2(A,l,l) \rightarrow H_2(B,l,l)$$
vanishes.
\end{defn}

\begin{prop}\label{main} If $f:(A,\mathfrak{m},k)\rightarrow(B,\mathfrak{n},l)$ has the $h_2$-vanishing property and there exists a local homomorphism of noetherian local rings $B \rightarrow C$ such that $fd_A(C)<\infty$ (that is, $Tor^A_n(C,-)=0$ for all $n\gg 0$), then $A$ is a regular local ring.
\end{prop}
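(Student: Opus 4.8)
The plan is to reduce everything to the regularity criterion of item 6, namely that $A$ is regular if and only if $H_2(A,k,k)=0$, and then to kill $H_2(A,k,k)$ by playing the $h_2$-vanishing hypothesis against Avramov's injectivity theorem (item 8), after extending scalars up to the residue field of $C$. First I would form the composite local homomorphism $A\to B\to C$ and write $(C,\mathfrak{p},l')$; the induced maps $k\hookrightarrow l\hookrightarrow l'$ are field extensions. Applying item 5 to $A\to k\to l'$ and to $B\to l\to l'$ yields natural isomorphisms $H_2(A,l',l')\cong H_2(A,k,k)\otimes_k l'$ and $H_2(B,l',l')\cong H_2(B,l,l)\otimes_l l'$. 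Since a field extension is faithfully flat, it suffices to prove $H_2(A,l',l')=0$ in order to deduce $H_2(A,k,k)=0$, and hence regularity of $A$ by item 6.

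Next I would apply Avramov's theorem (item 8) to the composite $A\to C$, which is local and satisfies $fd_A(C)<\infty$ by hypothesis: this gives that $H_2(A,l',l')\to H_2(C,l',l')$ is injective. Using the Jacobi-Zariski sequence (item 4) for $A\to B\to l'$ and for $B\to C\to l'$, this map factors as the composite of the natural change-of-source maps $H_2(A,l',l')\to H_2(B,l',l')\to H_2(C,l',l')$; consequently the first arrow $H_2(A,l',l')\to H_2(B,l',l')$ is itself injective.

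On the other hand, the $h_2$-vanishing hypothesis asserts that $H_2(A,l,l)\to H_2(B,l,l)$ is the zero map. Under the identifications of the first paragraph, the map $H_2(A,l',l')\to H_2(B,l',l')$ is the base change $-\otimes_l l'$ of $H_2(A,l,l)\to H_2(B,l,l)$, so it too vanishes. A homomorphism that is simultaneously injective and zero forces $H_2(A,l',l')=0$, and the desired conclusion follows from the reduction above.

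The step I expect to be the main obstacle is the compatibility used in the third paragraph: one must verify that the change-of-source map $H_2(A,l',l')\to H_2(B,l',l')$ agrees, through the isomorphisms of item 5, with the base change of $H_2(A,l,l)\to H_2(B,l,l)$. This rests on the naturality, with respect to change of the source ring, of both the flat base-change isomorphism of item 3 and the Jacobi-Zariski connecting isomorphisms, together with the transitivity of the change-of-source maps along $A\to B\to C$. These are standard functoriality properties of Andr\'e-Quillen homology, but they are precisely what makes the two descriptions of the map $H_2(A,l',l')\to H_2(B,l',l')$ coincide, so I would state them carefully rather than treat them as automatic.
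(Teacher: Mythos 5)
Your proof is correct and is essentially the paper's own argument: the paper likewise composes to $A\to C$, uses the base-change isomorphism $H_2(A,k,k)\otimes_k l'\cong H_2(A,l',l')$ to transfer the $h_2$-vanishing hypothesis, and plays the resulting zero map against the injectivity from Avramov's theorem to conclude $H_2=0$, hence regularity. The only difference is bookkeeping --- the paper phrases it as ``the composite $A\to C$ is $h_2$-vanishing, so assume $B=C$, and the zero map is injective,'' while you keep $B$ and $C$ separate and observe that the first factor of an injective composite is injective --- which is the same argument.
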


\begin{proof}
Since the composition $A \rightarrow B \rightarrow C$ has the $h_2$-vanishing property, we can assume B=C. Therefore, the zero map $H_2(A,l,l) \rightarrow H_2(B,l,l)$ is injective by Avramov's theorem, so $H_2(A,l,l)=0$ and then $A$ is regular.
\end{proof}

\begin{ex}\label{ex}
(i) If the homomorphism $f:A \rightarrow B$ factorizes into local homomorphisms $f:A \rightarrow R \rightarrow B$ where $R$ is a regular local ring, then $f$ has the $h_2$-vanishing property. This includes as a particular case the canonical epimorphism $A \rightarrow k$ of a local ring $(A,\mathfrak{m},k)$ into its residue field.\\

(ii) Let $f:(A,\mathfrak{m},k)\rightarrow (A,\mathfrak{m},k)$ be a contracting homomorphism \cite [\S12]{AIM}, that is, there exists some $i>0$ such that $f^i(\mathfrak{m})\subset \mathfrak{m}^2$. That means that for any $t>0$, $f^j(\mathfrak{m}) \subset \mathfrak{m}^t$ for some $j>0$. We will see that some power of $f$ has the $h_2$-vanishing property and how to apply Proposition \ref{main} to these endomorphisms.

Assume first that $A$ is complete. Since $f$ is contracting, the subring $F\subset A$ of elements fixed by $f$ is clearly a field \cite [5.9]{AHIY}. Let $F_0$ be a perfect subfield of $F$. Then $F_0 \rightarrow k$ is formally smooth and so there exists a section $h$ of the $F_0$-algebra homomorphism $A \rightarrow k$ \cite [2.2.3]{MR}. So $k_0 := Im(h) \subset A$ is a coefficient field for $A$, and there exists a surjective ring homomorphism $\pi :R:=k_0[[X_1,...,X_n]] \rightarrow A$ sending $X_1,...,X_n$ to a set of generators $x_1,...,x_n$ of the maximal ideal $\mathfrak{m}$ of $A$. Let $I$ be its kernel. For each $j$, choose a power series $P_j(x_1,...,x_n)$ in $x_1,...,x_n$ over $k_0$ representing $f^i(x_j) \in \mathfrak{m}^2$. We can choose $P_j$ of order $\geq 2$. Since $F_0$ is invariant by $f$, we have $F_0$-algebra homomorphisms

$$
\xymatrix{  k_0 \ar @{^{(}->}[d] & R \ar @{->>}[d]^{\pi} \\
A \ar[r]^{f^i}   & A }
$$

By \cite [0$_{IV}$,19.3.10]{EGA} there exists a $F_0$-algebra homomorphism $g_0:k_0 \rightarrow R$ making commutative the square. We define $g:R \rightarrow R$ by $g_{|k_0}:= g_0$, $g(X_j):=P_j(X_1,...,X_n)$. Since $g$ is a lifting of $f^i$, we have $g(I)\subset I$. Also, $g(\mathfrak{q})\subset \mathfrak{q}^2$, where $\mathfrak{q}=(X_1,...,X_n)$ is the maximal ideal of $R$.

Therefore, for any $t>0$ there exists some $s$ such that $g^s(\mathfrak{q})\subset \mathfrak{q}^t$, and then $g^s(I)\subset \mathfrak{q}^t\bigcap I$. By the Artin-Rees lemma, some $s$ verifies $g^s(I)\subset \mathfrak{q}I$. Consider the Jacobi-Zariski exact sequence associated to $R \rightarrow A \rightarrow k$
$$0=H_2(R,k,k) \rightarrow H_2(A,k,k) \rightarrow I/\mathfrak{q}I$$
The endomorphism induced by $f^{is}$ on $H_2(A,k,k)$ is the restriction of the one induced by $g^s$ on $I/\mathfrak{q}I$ which is zero. We have proved that if $f:A \rightarrow A$ is contracting, then some power $f^{is}$ of $f$ has the $h_2$-vanishing property when $A$ is complete. But since $H_2(A,k,k) = H_2(\hat{A},k,k)$, this is also valid when $A$ is not complete.\\

Note that the fact that we have had to replace $f$ by some power $f^{is}$, does not interfere with our result. More generally, let $f:A \rightarrow A$ be contracting and assume that there exists a noetherian local $A$-algebra $(C,\mathfrak{p},l)$ such that $fd_A(^{f^t}C)<\infty$ for \emph{some} $t$. By Avramov's theorem, the homomorphism $H_2(A,k,l) \rightarrow H_2(C,l,l)$ induced by $A\xrightarrow{f^t} A \rightarrow C$ is injective, and then so is the endomorphism $H_2(A,k,k) \rightarrow H_2(A,k,k)$ induced by $f^t$. Thus for any $r$, the endomorphism $H_2(A,k,k) \rightarrow H_2(A,k,k)$ induced by $f^{tr}$ is also injective. But choosing $r$ so that $f^{tr}$ has the $h_2$-vanishing property, we obtain $H_2(A,k,k)=0$ and then $A$ is regular. This case of a contracting endomorphism was proved in \cite [Proposition 2.6]{KL} and \cite [Remark 4.4]{TY} and includes the particular case of a finite $A$-algebra $C$ (localizing $C$ at a prime ideal contracting in $A$ to $\mathfrak{m}$) with $fd_A(^{f^t}C)<\infty$, which was proved with different methods in \cite {AHIY} in the more general case of a finite $A${\it-module} $C\neq0$ with $fd_A(^{f^t}C)<\infty$.\\

(iii) The good properties of Andr\'e-Quillen homology (for instance the ones stated above and its behaviour under tensor products \cite [5.21]{An}) allows us to obtain new examples of $h_2$-vanishing homomorphisms from others.
\end{ex}

\begin{rems}\label{hn}
(i) It can be proved that if $f:A \rightarrow A$ is contracting, then for any $n \geq 0$, there exists $s$ (depending on $n$) such that $f^{s}$ has the $h_n$-vanishing property, that is, it induces the zero map $H_n(A,k,k)\rightarrow H_n(A,k,k)$ \cite [Proposition 10]{Ma}. However, our \textit{ad hoc} proof given here for the case $n=2$, besides being simpler and shorter, gives a better idea of the relationship between contracting and $h_2$-vanishing.\\*
(ii) If $f:A \rightarrow A$ is the Frobenius endomorphism, then $f^{s}$ has the $h_n$-vanishing property for all $s > 0$, $n \geq 0$ \cite [Lemme 53]{An-b}.

\end{rems}

\hspace{25mm}

Avramov's theorem also allows us to obtain similar criteria for complete intersection, Gorenstein, and Cohen-Macaulay rings, provided we use the adequate definitions for homological dimensions in terms of ``deformations" (see \cite [\S8]{Av-c}) and flat dimension. We start by recalling the definition of upper complete intersection dimension introduced in \cite {T} (see also \cite {AGP}).

We say that a finite module $M\neq0$ over a noetherian local ring $A$ has finite upper complete intersection dimension and denote it by CI*-dim$_A(M)<\infty$ if there exists a flat local homomorphism of noetherian local rings $(A,\mathfrak{m},k) \rightarrow (A',\mathfrak{m}',k')$ such that $A'\otimes_Ak$ is a regular local ring, and a surjective homomorphism of noetherian local rings $Q \rightarrow A'$ with kernel generated by a regular sequence, such that $pd_Q(M\otimes_AA')<\infty$, where $pd$ denotes projective dimension.

If $f:(A,\mathfrak{m},k)\rightarrow(B,\mathfrak{n},l)$ is a local homomorphism of noetherian local rings, a Cohen factorization of $f$ is a factorization $A\xrightarrow{i} R \xrightarrow{p} B$ of $f$ where $R$ is a noetherian local ring, $i$ is a flat local homomorphism, $R\otimes_Ak$ is a regular local ring and $p$ is surjective. If $B$ is complete, a Cohen factorization always exists \cite {AFH}.

We say that a local homomorphism of noetherian local rings $f:(A,\mathfrak{m},k)\rightarrow(B,\mathfrak{n},l)$ is of finite upper complete intersection dimension (and denote it by CI*-dim$(f)<\infty$) if there exists a Cohen factorization $A\rightarrow R \rightarrow \hat{B}$, such that CI*-dim$_R(\hat{B})<\infty$.

\begin{lem}\label{lemma} {\rm (Essentially \cite [lemma 1.7]{Av-b})}
Let $(A,k)\rightarrow(R,l)\rightarrow(D,E)$ be local homomorphisms of noetherian local rings such that $R$ is a flat $A$-module and $R\otimes_Ak$ is regular. Then $H_n(A,D,E)=H_n(R,D,E)$ for all $n\geq 2$.
\end{lem}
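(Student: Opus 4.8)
The plan is to apply the Jacobi--Zariski exact sequence (recalled in 4 above) to the composite $A\to R\to D$ with coefficients in $E$,
\[
\cdots\to H_n(A,R,E)\to H_n(A,D,E)\to H_n(R,D,E)\to H_{n-1}(A,R,E)\to\cdots,
\]
so that the desired identity $H_n(A,D,E)=H_n(R,D,E)$ will drop out once the relative terms are shown to vanish in the right range. The whole proof thus reduces to establishing that $H_m(A,R,E)=0$ for every $m\geq 2$.

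For this vanishing I would use the two hypotheses on $A\to R$: flatness and regularity of the closed fibre $\overline{R}:=R\otimes_Ak$. Since $A\to D$ is local, $\mathfrak m$ is carried into the maximal ideal of $D$ and so annihilates $E$; hence $E$ is an $\overline{R}$-module, and the base change property (2 above), applied to the flat map $A\to R$ and to $A\to k$, gives $H_m(A,R,E)=H_m(k,\overline{R},E)$. As $E$ is a field extension of the residue field $l$ of $\overline{R}$, it is free over $l$, so 3 yields $H_m(k,\overline{R},E)=H_m(k,\overline{R},l)\otimes_lE$. Finally I would read $H_m(k,\overline{R},l)$ off the Jacobi--Zariski sequence of $k\to\overline{R}\to l$: for $m\geq 2$ the term $H_{m+1}(\overline{R},l,l)$ vanishes because $\overline{R}$ is regular (6 above) and $H_m(k,l,l)$ vanishes because $k\to l$ is a field extension (5 above), whence $H_m(k,\overline{R},l)=0$. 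This gives the required vanishing.

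With this in hand the case $n\geq 3$ is immediate: in the displayed sequence both neighbouring terms $H_n(A,R,E)$ and $H_{n-1}(A,R,E)$ have index $\geq 2$, hence vanish, and $H_n(A,D,E)\xrightarrow{\sim}H_n(R,D,E)$. The case $n=2$ is where I expect the real difficulty. There the vanishing of $H_2(A,R,E)$ only gives injectivity of $H_2(A,D,E)\to H_2(R,D,E)$; surjectivity is equivalent to the triviality of the connecting map $H_2(R,D,E)\to H_1(A,R,E)$, i.e. to the injectivity of $\iota\colon H_1(A,R,E)\to H_1(A,D,E)$. The obstruction is that $H_1(A,R,E)=H_1(k,\overline{R},l)\otimes_lE$ need not be zero: from the same sequence it is the kernel of $H_1(k,l,l)\to\mathfrak n_{\overline{R}}/\mathfrak n_{\overline{R}}^2$, and $H_1(k,l,l)$ is typically nonzero when the residue extension $l/k$ is inseparable. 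So the argument used for $n\geq 3$ does not carry over.

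To settle $n=2$ I would exploit that $E$ is the residue field of $D$ and that $R\to E$ factors through $R\to l\hookrightarrow E$. By naturality of the connecting homomorphism in the third variable (the augmentation $D\to E$), the map $H_2(R,D,E)\to H_1(A,R,E)$ factors through $H_2(R,E,E)$, which transports the problem onto the residue-field tower $k\subseteq l\subseteq E$. The genuinely delicate point --- and the heart of the statement --- is that one cannot dispose of this by a blanket vanishing of the intermediate connecting maps (for instance the analogous map for $A\to R\to l$ is zero only when $H_2(A,l,l)\to H_2(R,l,l)$ is onto, which fails in general), so one must control the actual image of $H_2(R,D,E)$. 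This is precisely the content of Avramov's Lemma 1.7, and at this step I would either reproduce that computation or invoke it directly to obtain the injectivity of $\iota$ and thereby close the case $n=2$.
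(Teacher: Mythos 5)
Your reduction is sound and matches the paper up to the crucial point: you prove $H_m(A,R,E)=0$ for $m\geq 2$ (flat base change to $H_m(k,R\otimes_Ak,E)$, then Jacobi--Zariski over the residue-field tower, exactly in the paper's spirit), you get the isomorphisms for $n\geq 3$ for free, and you correctly isolate the remaining issue: surjectivity in degree $2$ is equivalent to injectivity of $\alpha\colon H_1(A,R,E)\to H_1(A,D,E)$. But at that point you stop and say you would ``either reproduce that computation or invoke [Avramov's Lemma 1.7] directly.'' That is a genuine gap, and in this context it is circular: the statement you are asked to prove \emph{is} essentially Avramov's Lemma 1.7, so invoking it proves nothing. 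Moreover your diagnosis of the difficulty is off: one does not need to ``control the actual image of $H_2(R,D,E)$'' at all, nor to route the connecting map through $H_2(R,E,E)$.

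The missing idea is a short diagram argument that uses the two hypotheses one more time. By functoriality of Andr\'e--Quillen homology (the square $A\to R$, $k\to R\otimes_Ak$ over $A\to k$, and the square $A\to D$, $k\to E$), the map $\alpha$ fits into a commutative diagram
$$
\xymatrix{ H_1(k,R\otimes_Ak,E) \ar[r] & H_1(k,E,E)  \\
  H_1(A,R,E) \ar[u]^{\simeq} \ar[r]^{\alpha}  & H_1(A,D,E) \ar[u] }
$$
where the left vertical map is the flat base change isomorphism and the top map sits in the Jacobi--Zariski sequence of $k \to R\otimes_Ak \to E$:
$$H_2(R\otimes_Ak,E,E) \rightarrow H_1(k,R\otimes_Ak,E) \rightarrow H_1(k,E,E).$$
Since $R\otimes_Ak$ is regular, $H_2(R\otimes_Ak,E,E)=H_2(R\otimes_Ak,l,l)\otimes_lE=0$, so the top map is injective; hence the composite $H_1(A,R,E)\to H_1(k,E,E)$ is injective, and therefore so is $\alpha$. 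This kills the connecting map $H_2(R,D,E)\to H_1(A,R,E)$ and closes the case $n=2$ with no appeal to Avramov's lemma. (Incidentally, the same trick would also finish your variant through $H_2(R,E,E)$: it shows $H_1(A,R,E)\to H_1(A,E,E)$ is injective by the identical diagram with $D$ replaced by $E$; but the detour is unnecessary.)
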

\begin{proof}
By flat base change $H_n(A,R,E)=H_n(k,R\otimes_Ak,E)$, and by the Jacobi-Zariski exact sequence associated to $k \rightarrow R\otimes_Ak \rightarrow E$ we have $H_n(k,R\otimes_Ak,E)=H_{n+1}(R\otimes_Ak,E,E)=0$ for all $n \geq 2$. So the Jacobi-Zariski exact sequence
$$... \rightarrow H_n(A,R,E) \rightarrow H_n(A,D,E) \rightarrow H_n(R,D,E) \rightarrow H_{n-1}(A,R,E) \rightarrow ...$$
gives isomorphisms $H_n(A,D,E)=H_n(R,D,E)$ for all $n\geq 3$ and an exact sequence
$$0 \rightarrow  H_2(A,D,E) \rightarrow H_2(R,D,E) \rightarrow H_1(A,R,E) \xrightarrow{\alpha} H_1(A,D,E) \rightarrow ...$$

The injectivity of $\alpha$ follows from the commutative diagram with exact upper row
$$
\xymatrix{  0=H_2(R\otimes_Ak,E,E) \ar[r] & H_1(k,R\otimes_Ak,E) \ar[r] & H_1(k,E,E)  \\
&  H_1(A,R,E) \ar[u]^{\simeq} \ar[r]^{\alpha}  & H_2(A,D,E) \ar[u] }
$$
\end{proof}

\begin{prop}\label{ci}
If $f:(A,\mathfrak{m},k)\rightarrow(B,\mathfrak{n},l)$ has the $h_2$-vanishing property and there exists a local homomorphism of noetherian local rings $g:(B,\mathfrak{n},l) \rightarrow (C,\mathfrak{p},E)$ such that CI*-dim$(gf)<\infty$, then $A$ is a complete intersection ring.
\end{prop}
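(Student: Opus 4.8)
The plan is to unwind the hypothesis CI*-dim$(gf)<\infty$ into explicit data, transfer everything onto a ``base'' ring via Lemma \ref{lemma}, and then play the $h_2$-vanishing of $f$ against Avramov's injectivity (item 8), exactly as in Proposition \ref{main} but one ring further down. So I would fix a Cohen factorization $A\xrightarrow{i}R\xrightarrow{p}\hat{C}$ of $\widehat{gf}$ (with $i$ flat local, $R\otimes_Ak$ regular, $p$ surjective, so that $R$ has residue field $E$), a flat local map $R\rightarrow R'$ with $R'\otimes_RE$ regular, a surjection $Q\rightarrow R'$ whose kernel is a regular sequence, and the finite $Q$-module $N:=\hat{C}\otimes_RR'$ with $pd_Q(N)<\infty$; write $E'$ for the common residue field of $R'$, $Q$ and $N$. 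Two preliminary reductions set the stage: first, $A\rightarrow R'$ is flat local with regular closed fibre $R'\otimes_Ak$ (regular since $R\otimes_Ak$ and $R'\otimes_RE$ are), so Lemma \ref{lemma} applied to $A\rightarrow R'\rightarrow E'$ gives natural isomorphisms $H_n(A,E',E')\xrightarrow{\sim}H_n(R',E',E')$ for $n\geq2$; second, since $p$ is surjective, $N=R'/(\ker p)R'$ is a local ring with residue field $E'$ and $fd_Q(N)\leq pd_Q(N)<\infty$.

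The crux is to compute the map $H_2(A,E',E')\rightarrow H_2(N,E',E')$ in two ways. The ring homomorphism $A\rightarrow N$ has two factorizations that coincide as maps: $A\xrightarrow{f}B\rightarrow C\rightarrow\hat{C}\rightarrow N$ and $A\rightarrow R'\rightarrow N$ (they agree because $A\rightarrow R\rightarrow\hat{C}$ factors $\widehat{gf}$ and $R\rightarrow\hat{C}\rightarrow N$ is the structure map $R\rightarrow R'\rightarrow N$). Base-changing the $h_2$-vanishing map $H_2(A,l,l)\rightarrow H_2(B,l,l)$ along the field extension $l\rightarrow E'$ via item 5 shows $H_2(A,E',E')\rightarrow H_2(B,E',E')$ is zero, so through the first factorization $H_2(A,E',E')\rightarrow H_2(N,E',E')$ is zero. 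Through the second factorization this same map is $H_2(A,E',E')\xrightarrow{\sim}H_2(R',E',E')\rightarrow H_2(N,E',E')$, forcing $H_2(R',E',E')\rightarrow H_2(N,E',E')$ to vanish; and since $Q\rightarrow R'\rightarrow N$, the map $H_2(Q,E',E')\rightarrow H_2(N,E',E')$ vanishes as well. But $fd_Q(N)<\infty$, so this last map is injective by Avramov's theorem (item 8). Hence $H_2(Q,E',E')=0$ and $Q$ is regular.

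Finally I would descend regularity of $Q$ to the complete intersection property of $A$. As $R'=Q/(\text{regular sequence})$, the Jacobi--Zariski sequence of $Q\rightarrow R'\rightarrow E'$ together with item 6 gives $H_n(R',E',E')=H_n(Q,E',E')=0$ for $n\geq3$; combined with $H_n(A,E',E')=H_n(R',E',E')$ for $n\geq2$ and $H_n(A,E',E')=H_n(A,k,k)\otimes_kE'$ (item 5), this yields $H_n(A,k,k)=0$ for all $n\geq3$, i.e.\ $A$ is a complete intersection by the André--Quillen characterization of such rings.

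The step I expect to require the most care is the bookkeeping of residue fields and base points: the $h_2$-vanishing hypothesis lives over $l$, whereas the CI*-factorization data live over $E'$, and the whole argument depends on matching the map induced by $f$ with the maps coming from $Q\rightarrow R'$. This is precisely what the naturality of the base-change isomorphism (item 5) and of Lemma \ref{lemma} deliver, and the key verification is that the two factorizations of $A\rightarrow N$ are literally the same homomorphism, so that the two computations of $H_2(A,E',E')\rightarrow H_2(N,E',E')$ may be equated.
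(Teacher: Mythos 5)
Your proof is correct. Its core --- funneling the $h_2$-vanishing property down to the map $Q\rightarrow \hat{C}\otimes_RR'$ and invoking Avramov's injectivity (item 8, i.e.\ Proposition \ref{main}) to conclude that $Q$ is regular --- is the same as the paper's; the bookkeeping differs only mildly. The paper base-changes the vanishing at the stage $H_2(R,E,E)\rightarrow H_2(\hat{C},E,E)$ and applies Lemma \ref{lemma} twice (to $A\rightarrow R\rightarrow E$ and to $R\rightarrow R'\rightarrow E'$), while you base-change at $H_2(A,l,l)\rightarrow H_2(B,l,l)$, verify that the two factorizations of $A\rightarrow N$ coincide (they do: $p(r)\otimes 1=1\otimes\phi(r)$ in $\hat{C}\otimes_RR'$), and apply Lemma \ref{lemma} once to $A\rightarrow R'\rightarrow E'$; note that this single application needs the fact that $R'\otimes_Ak$ is regular, which you assert in passing --- it is standard (flat local homomorphism with regular base and regular closed fibre), but you could avoid it entirely by applying the lemma to $A\rightarrow R\rightarrow E'$ and to $R\rightarrow R'\rightarrow E'$ and composing, by naturality. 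Where you genuinely diverge is the final descent step. The paper notes that $R'=Q/(\text{regular sequence})$ is a complete intersection and cites Avramov's flat descent of that property along the flat map $A\rightarrow R'$; you instead stay inside Andr\'e--Quillen homology, deducing $H_n(R',E',E')=0$ for $n\geq 3$ from item 6 and Jacobi--Zariski, transferring this to $H_n(A,k,k)\otimes_kE'=H_n(A,E',E')$ via Lemma \ref{lemma} and item 5, and concluding by the characterization of complete intersections through vanishing of $H_3$ (equivalently of all $H_n$, $n\geq 3$). That characterization is not among the paper's numbered facts, but it is standard and is recoverable from the paper's own toolkit: writing $\hat{A}=R_0/I$ with $R_0$ regular (Cohen's theorem), items 4, 6 and 7 give $H_3(A,k,k)\cong H_2(R_0,\hat{A},k)$, whose vanishing means $I$ is generated by a regular sequence. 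So both arguments are legitimate: the paper's is shorter given its references, delegating descent to a known theorem, while yours is more self-contained homologically at the cost of one extra (routine) Andr\'e--Quillen input.
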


\begin{proof}
Let $A\rightarrow R \rightarrow \hat{C}$ be a Cohen factorization, $R \rightarrow R'$ a flat local homomorphism with regular closed fiber, $Q \rightarrow R'$ a surjective homomorphism of noetherian local rings with kernel generated by a regular sequence such that $pd_Q(\hat{C}\otimes_RR')<\infty$. We have a commutative triangle

$$
\xymatrix{ & H_2(R,E,E) \ar[rd]^ \gamma  \\
H_2(A,E,E) \ar[ru]^ \beta \ar[rr]^\alpha && H_2(\hat{C},E,E)}
$$
where $\alpha = 0$ since $f$ has the $h_2$-vanishing property, and $\beta$ is surjective by Lemma \ref{lemma}. Then $\gamma =0$ and so the homomorphism $H_2(R,E'E') \rightarrow H_2(\hat{C},E',E')$ also vanishes, where $E'$ is the residue field of $R'$ and $\hat{C}\otimes_RR'$. We have a commutative diagram

$$
\xymatrix{  H_2(R,E',E') \ar[r]^0 \ar[d]^{\lambda} & H_2(\hat{C},E',E') \ar[d]\\
H_2(R',E',E') \ar[r]^{\mu}  & H_2(\hat{C}\otimes_RR',E',E') }
$$
where $\lambda$ is an isomorphism by Lemma \ref{lemma}. We see that $\mu =0$, that is $R' \rightarrow \hat{C}\otimes_RR'$ has the $h_2$-vanishing property. Composing with $Q \rightarrow R'$, we deduce that $Q \rightarrow \hat{C}\otimes_RR'$  has the $h_2$-vanishing property. By Proposition \ref{main}, $Q$ is a regular local ring, and then $R'$ is a complete intersection ring. By flat descent (\cite {Av-d} or \cite {Av-a}), $A$ is a complete intersection ring.
\end{proof}

Following in part \cite {Ve} we define the upper Gorenstein dimension as follows. If $M\neq0$ is a finite module over a noetherian local ring $A$, we say that $M$ has finite upper Gorenstein dimension if there exists a flat local homomorphism of noetherian local rings $A \rightarrow A'$ with regular closed fiber and a surjective homomorphism of noetherian local rings $Q \rightarrow A'$ verifying that $Ext_Q^n(A',Q)=A'$ for some $n$ and $Ext_Q^i(A',Q)=0$ for all $i\neq n$, such that $pd_Q(M\otimes_AA')<\infty$. We say that a local homomorphism of noetherian local rings $A \rightarrow B$ has finite upper Gorenstein dimension if for some Cohen factorization $A\rightarrow R \rightarrow \hat{B}$, the $R$-module $\hat{B}$ has finite upper Gorenstein dimension.

\begin{prop}\label{Gorenstein}
If $A \rightarrow B$ has the $h_2$-vanishing property and there exists a local homomorphism of noetherian local rings $g:B \rightarrow C$ such that $gf$ has finite upper Gorenstein dimension, then A is a Gorenstein ring.
\end{prop}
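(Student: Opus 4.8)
The plan is to follow the exact same template as the proof of Proposition \ref{ci}, replacing the complete intersection deformation data by the Gorenstein deformation data, and replacing the final appeal to flat descent for complete intersections by the corresponding statement for Gorenstein rings. Concretely, I would begin by unwinding the hypothesis that $gf$ has finite upper Gorenstein dimension: this gives a Cohen factorization $A \rightarrow R \rightarrow \hat{C}$, a flat local homomorphism $R \rightarrow R'$ with regular closed fiber, and a surjection $Q \rightarrow R'$ with the Gorenstein-type condition $\mathrm{Ext}_Q^n(R',Q)=R'$ for some $n$ and $\mathrm{Ext}_Q^i(R',Q)=0$ for $i \neq n$, such that $pd_Q(\hat{C}\otimes_R R')<\infty$.

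The middle of the argument is identical to Proposition \ref{ci} and uses only the $h_2$-vanishing property together with Lemma \ref{lemma}, not any special feature of the deformation. First I would form the commutative triangle relating $H_2(A,E,E)$, $H_2(R,E,E)$ and $H_2(\hat{C},E,E)$; since $f$ has the $h_2$-vanishing property the map $\alpha$ is zero, and since $\beta$ is surjective by Lemma \ref{lemma}, the map $\gamma$ vanishes. Passing to the residue field $E'$ of $R'$ and using Lemma \ref{lemma} again (so that $\lambda$ is an isomorphism), I would conclude that $R' \rightarrow \hat{C}\otimes_R R'$ has the $h_2$-vanishing property, hence so does $Q \rightarrow \hat{C}\otimes_R R'$ after composing with $Q \rightarrow R'$. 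Because $pd_Q(\hat{C}\otimes_R R')<\infty$ gives in particular $fd_Q(\hat{C}\otimes_R R')<\infty$, Proposition \ref{main} applies and shows that $Q$ is a regular local ring.

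The only genuinely new step, and the one I expect to be the main obstacle, is the final descent. Once $Q$ is regular, the Gorenstein condition $\mathrm{Ext}_Q^n(R',Q)=R'$ with all other Ext vanishing says precisely that $R'$ is a quotient of the regular local ring $Q$ by an ideal whose $Q$-free resolution witnesses that $R'$ is a Gorenstein $Q$-module; since $Q$ is regular this forces $R'$ itself to be a Gorenstein local ring. I would then need to descend Gorensteinness along the maps $R \rightarrow R'$ and $A \rightarrow R$: the flat local homomorphism $R \rightarrow R'$ has regular (hence Gorenstein) closed fiber, so $R$ is Gorenstein iff $R'$ is, and the flat map $A \rightarrow R$ of the Cohen factorization has Gorenstein closed fiber $R\otimes_A k$ (it is even regular), so $A$ is Gorenstein iff $R$ is. Passing finally between $A$ and $\hat{A}$, which are simultaneously Gorenstein, completes the argument. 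The subtlety here is to invoke the correct ascent/descent theorems for the Gorenstein property along flat local homomorphisms with Gorenstein fibers (as in \cite{Av-d}), which is the analogue of the flat descent used at the end of Proposition \ref{ci}.
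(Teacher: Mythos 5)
Your proposal is correct and takes essentially the same route as the paper: the paper likewise runs the Proposition \ref{ci} argument to produce the regular ring $Q$, deduces that $R'$ is Gorenstein, and finishes by flat descent along $R\rightarrow R'$ and $A\rightarrow R$. The one step you invoke as a known fact --- that $Q$ regular together with $Ext_Q^n(R',Q)=R'$ and $Ext_Q^i(R',Q)=0$ for $i\neq n$ forces $R'$ to be Gorenstein --- is exactly what the paper proves directly, via the change-of-rings spectral sequence $E_2^{pq}=Ext_{R'}^p(E',Ext_Q^q(R',Q)) \Rightarrow Ext_Q^{p+q}(E',Q)$, which under these hypotheses collapses to show that $Ext_{R'}^p(E',R')$ is nonzero in exactly one degree.
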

\begin{proof}
As in the proof of Proposition \ref{ci}, we know that there exist a Cohen factorization $A\rightarrow R \rightarrow \hat{C}$, a flat local homomorphism $R \rightarrow R'$, a regular local ring $Q$ and a surjective homomorphism $Q \rightarrow R'$ such that $Ext_Q^n(R',Q)=R'$ for some $n$ and $Ext_Q^i(R',Q)=0$ for all $i\neq n$. Now if $E'$ is the residue field of $R'$, from the change of rings spectral sequence
$$E_2^{pq}=Ext_{R'}^p(E',Ext_Q^q(R',Q)) \Rightarrow Ext_Q^{p+q}(E',Q)$$
we deduce that $R'$ is Gorenstein. By flat descent, $A$ is also Gorenstein.
\end{proof}

\begin{rem}
For a noetherian local ring $S$ let $cmd(S) := dim(S) - depth(S)$ \cite [0$_{IV}$,16.4.9]{EGA}, \cite {AF}. If we say that a finite $A$-module $M\neq 0$ has finite Cohen-Macaulay dimension when there exists a flat local homomorphism of noetherian local rings $A \rightarrow A'$ with regular closed fiber and a surjective homomorphism of noetherian local rings $Q \rightarrow A'$ verifying that $cmd(Q) = cmd(A')$ (compare with \cite [(3.2)]{AF}) and $pd_Q(M\otimes_AA')<\infty$, then we have a similar criterion for Cohen-Macaulay rings.
\end{rem}

\begin{rems}\label{h4}
(i) A detailed inspection of the proofs (using also \cite [17.13, 6.27]{An}), shows that these criteria for complete intersection, Gorenstein and Cohen-Macaulay are also valid if instead of assuming the $h_2$-vanishing property we assume the $h_4$-vanishing property (see Remark \ref{hn}.(i)). For instance, any local homomorphism factorizing through a complete intersection ring is $h_4$-vanishing but not necessarily $h_2$-vanishing. \\*
(ii) Particular cases of these facts are then:\\*
- Let $f:A \rightarrow B$ be a local homomorphism factorizing into local homomorphisms $A \rightarrow R \rightarrow B$ where $R$ is a regular local ring. If fd$_A(B)<\infty$ then $A$ is a regular ring.\\*
- Let $f:A \rightarrow B$ be a local homomorphism factorizing into local homomorphisms $A \rightarrow R \rightarrow B$ where $R$ is a complete intersection local ring. If CI*-dim$(f)<\infty$ then $A$ is a complete intersection ring.

We do not know if a similar result exists for the Gorenstein property.
\end{rems}


\end{document}